\newtheorem{theorem}{Theorem}[section]
\newtheorem{thm}{Theorem}[section]
\newtheorem{lem}[theorem]{Lemma}
\newtheorem{cor}[theorem]{Corollary}
\newtheorem{prop}[theorem]{Proposition}
\theoremstyle{definition}
\theoremstyle{remark}
\newtheorem{remark}[theorem]{Remark}
\newtheorem{rem}[theorem]{Remark}
\numberwithin{equation}{section}
\newcommand{\loc}{\operatorname{loc}}
\newcommand{\Dom}{\operatorname{Dom}}
\newcommand{\mcomp}{C_{c}^{\infty}(M)}
\newcommand{\ecomp}{C_{c}^{\infty}(E)}
\newcommand{\lloc}{L_{\loc}}
\newcommand{\Del}{\Delta}
\newcommand{\RE}{\operatorname{Re}}
\newcommand{\End}{\operatorname{End}}
\newcommand\RR{\mathbb{R}}
\author{Ognjen Milatovic}
\address{Department of Mathematics
and Statistics
\\ University of North Florida
\\ Jacksonville, FL
32224 \\ USA}
\email{omilatov@unf.edu}
\author{Hemanth Saratchandran}
\address{Beijing International Center for Mathematical Research
\\ No. 5 Yiheyuan Road, Haidian District
\\ Beijing 100871\\ China P.R.}
\email{hemanth.saratchandran@gmail.com}
\title{Inequalities and separation for covariant Schr\"odinger operators}
\keywords{covariant Schr\"odinger operator, Riemannian manifold, separation}
\subjclass{35P05,47B25, 58J50}
\begin{document}
\maketitle
\begin{abstract}
We consider a differential expression $L^{\nabla}_{V}=\nabla^{\dagger}\nabla+V$,
where $\nabla$ is  a metric covariant derivative on a Hermitian bundle $E$ over a
geodesically complete Riemannian manifold $(M,g)$ with metric $g$, and $V$  is a
linear self-adjoint bundle map on $E$. In the language of
Everitt and Giertz, the differential expression $L^{\nabla}_{V}$ is said
to be separated in $L^p(E)$ if for all $u\in L^p(E)$ such that
$L^{\nabla}_{V}u\in L^p(E)$, we have $Vu\in L^p(E)$. We give sufficient
conditions for $L^{\nabla}_{V}$ to be separated in $L^2(E)$.
We then study the problem of separation of
$L^{\nabla}_{V}$ in the more general $L^p$-spaces, and give sufficient conditions for $L^{\nabla}_{V}$ to be
separated in $L^p(E)$, when $1<p<\infty$.

\end{abstract}

% main text

\section{Introduction}\label{S:intro-1}

The study of the separation property for Schr\"odinger operators on $\RR^n$ was initiated through
the work of Everitt and Giertz in \cite{Everitt-Giertz77}. We recall that the expression $-\Delta + V$ in $L^p(\RR^n)$
is \textit{separated} if the following property is satisfied: For all $u \in L^p(\RR^n)$ such that
$(-\Delta + V)u \in L^p(\RR^n)$, we have that $-\Delta u \in L^p(\RR^n)$ and $Vu \in L^p(\RR^n)$.
After the work of Everitt and Giertz, various authors took up the study of separation problems for
(second and higher order) differential operators; see~\cite{Boimatov88,Boimatov97,hdn-12,Okazawa-84} and references therein. The paper \cite{Milatovic06-separation} then studied the separation property of the operator
$\Del_M + v$ on $L^2(M)$, where $M$ is a non-compact Riemannian manifold,
$\Del_M$ is the scalar Laplacian, and $v \in C^1(M)$.
The separation problem for the differential expression $\Del_M + v$
in $L^p(M)$ was first considered, in the bounded geometry setting,
in \cite{Milatovic13-separation}. The work in \cite{Milatovic18-separation} gives another proof of the main theorem in \cite{Milatovic13-separation}, crucially without any bounded geometry hypothesis. For a study of separation in the context of
a perturbation of the (magnetic) Bi-Laplacian on $L^2(M)$, see the papers \cite{AAR,Milatovic18-separation}. A closer look at the works mentioned in this paragraph reveals that the separation property is linked to the self-adjointness in $L^2$ (or $m$-accretivity in $L^p$) of the underlying operator. In the context of a Riemannian manifold $M$, the latter problem has been studied quite a bit over the past two decades. For recent references see, for instance, the papers~\cite{bs-16,GK,GP,Pran-Ser-Riz-16} and chapter XI in~\cite{Guneysu-2016}.

In this article we consider the separation problem for the differential expression
$\nabla^{\dagger}\nabla+V$, where $\nabla$ is a metric covariant derivative on a Hermitian bundle $E$ over a Riemannian
manifold $M$, $\nabla^{\dagger}$ its formal adjoint, and $V$ is a self-adjoint endomorphism of $E$. We start with the
separation problem on $L^2(E)$, obtaining a result (see Theorem \ref{T:main})
that can be seen as an extension of the work carried out in
\cite{Milatovic06-separation}.  The condition~(\ref{E:assumption-2}) on the endomorphism $V$ that guarantees the separation property is analogous to the one in the scalar case. We then move on to consider the separation problem of the above operator in
$L^p(E)$, $1<p<\infty$, obtaining a result (see Theorem \ref{T:main-2}) that generalizes the work~\cite{Milatovic13-separation}.
We do this (see Proposition~\ref{L:c-e} and Corollary~\ref{C:domination} below for precise statements) by exploiting a coercive estimate for $\Del_M + v$ in $L^p(M)$ from~\cite{Milatovic13-separation} alongside the following property from~\cite{Guneysu-12}: if $V\geq v$, where $v\geq 0$ is a real-valued function on $M$, then the $L^p$-semigroup corresponding to $\nabla^{\dagger}\nabla+V$ is dominated by the $L^p$-semigroup corresponding to $\Delta_M+v$. In the case $p \neq 2$, we  assumed, in addition to geodesic completeness of $M$, that the Ricci curvature of $M$ is bounded from below by a constant. One reason is that, as far as we know, the only available proof of the coercive estimate~(\ref{M:Del_bound}) in the case $p\neq 2$ uses the Kato inequality approach, which leads one to apply, in the
language of section XIII.5 of~\cite{Guneysu-2016}, the $L^{p}$-positivity preservation property of $M$. The latter property,
whose proof is based on the construction of a sequence of Laplacian cut-off functions (see section III.1 in~\cite{Guneysu-2016}
for details), is known to hold under the aforementioned assumption on the Ricci curvature. Actually, as
shown in~\cite{B-S-16}, this hypothesis on Ricci curvature can be further weakened to assume boundedness below
by a (possibly unbounded) non-positive function depending on the distance from a reference point.
We mention in passing that the $L^{p}$-positivity preservation property of $M$ is related to the
so-called BMS-conjecture, the details of which are explained in~\cite{bms} and~\cite{Guneysu-Ulmer}.
Another reason for the hypothesis on Ricci curvature is that this assumption is used (see section~\ref{SS:op-h} below for details) for the $m$-accretivity of $\nabla^{\dagger}\nabla+V$  in $L^p(E)$ in the case $p\geq 3$.

Lastly, we should point out that although the separation property for $\Del_M + v$ in $L^p(M)$, with $v\geq 0$, was obtained in~\cite{Milatovic18-separation} under the geodesic completeness assumption on $M$ only, it was done so without explicitly establishing~(\ref{M:Del_bound}). Instead, assuming~(\ref{E:assumption-2}) with $v$ replaced by the Yosida approximation $v_{\varepsilon}:=v(1+\varepsilon v)^{-1}$, $\varepsilon>0$, and with a certain condition on the constant $\gamma$, the work~\cite{Milatovic18-separation} establishes an estimate involving the operator $\Del_{M}$ and the multiplication operator by $v_{\varepsilon}$. Using the abstract framework of~\cite{Okazawa-84}, one concludes the $m$-accretivity of the (operator) sum of ``maximal" operators corresponding to $\Del_{M}$  and $v$, which, due to the fact (see section~\ref{SS:op-h} below) that the ``maximal" operator corresponding to $\Del_M + v$ is $m$-accretive in $L^p(M)$, $1<p<\infty$, leads to the separation property. The approach from~\cite{Milatovic18-separation} does not seem to carry to covariant Schr\"odinger operators.

\section{Main Results}\label{S:main}

\subsection{The setting}\label{SS:setting}
Let $M$ be a smooth connected Riemannian manifold without boundary, with metric $g$, and with Riemannian volume element $d\mu$.
Let $E$ be a vector bundle over $M$ with Hermitian structure $\langle\cdot, \cdot\rangle_{x}$ and the corresponding norms $|\cdot|_{x}$ on fibers $E_{x}$. Throughout the paper, the symbols $C^{\infty}(E)$ and $\ecomp$  denote smooth sections of $E$ and smooth compactly supported sections of $E$, respectively. The notation  $L^p(E)$, $1\leq p<\infty$, indicates the space of $p$-integrable sections of $E$ with the norm
\[
\|u\|_{p}:=\int_{M}|u(x)|^p\,d\mu.
\]
In the special case $p=2$, we have a Hilbert space $L^2(E)$ and we use  $(\cdot,\cdot)$ to denote the corresponding inner product. For local Sobolev spaces of sections we use the notation $W^{k,p}_{\loc}(E)$, with $k$ and $p$ indicating the highest order of derivatives and the corresponding $L^{p}$-space, respectively. For $k=0$ we use the simpler notation $L^{p}_{\loc}(E)$. In the case $E=M\times\mathbb{C}$, we denote the corresponding function spaces by $C^{\infty}(M)$, $\mcomp$, $L^p(M)$, $W^{k,p}_{\loc}(M)$, and $L^{p}_{\loc}(M)$.
%For smooth $1$-forms we use the symbol $\Omega^1(M)$, and for compactly supported ones we use $\Omega_{c}^1(M)$. The tangent and cotangent bundle of $M$ are indicated by $TM$ and $T^*M$ respectively.

In the remainder of the paper, $\nabla\colon C^{\infty}(E)\to C^{\infty}(T^*M\otimes E)$ stands for a smooth metric covariant derivative on $E$, and $\nabla^{\dagger}\colon C^{\infty}(T^*M\otimes E)\to C^{\infty}(E)$  indicates the formal adjoint of $\nabla$ with respect to $(\cdot,\cdot)$. The covariant derivative $\nabla$ on $E$ induces the covariant derivative $\nabla^{\End}$ on the bundle of endomorphisms $\End E$, making $\nabla^{\End} V$ a section of the bundle $T^*M\otimes (\End E)$.

We study a covariant Schr\"odinger differential expression
\begin{equation}\label{E:expression-L}
L^{\nabla}_{V}:=\nabla^{\dagger}\nabla+V,
\end{equation}
where $V$ is a linear self-adjoint bundle map $V\in \lloc^{\infty}(\End E)$.
To help us describe the separation property,  we define
\begin{equation}\label{E:D-p}
\textrm{D}^{\nabla}_p:=\{u\in L^p(E)\colon L^{\nabla}_{V}u\in L^p(E)\}, \qquad 1<p<\infty,
\end{equation}
where $L^{\nabla}_{V}u$ is understood in the sense of distributions. In the case of a real valued function $v\in \lloc^{\infty}(M)$, trivial bundle $E=M\times\mathbb{C}$ and $\nabla=d$, where $d$ is the standard differential, we will use the notations $L^{d}_{v}:=\Delta_{M}+v$, with $\Del_{M}:=d^{\dagger} d$ indicating the scalar Laplacian.

In general, it is not true that for all $u\in\textrm{D}^{\nabla}_p$
we have $\nabla^{\dagger}\nabla u\in L^p(E)$ and $Vu\in L^p(E)$ separately. Using the language of Everitt and Giertz (see\cite{Everitt-Giertz77}), we say that the differential
expression $L^{\nabla}_{V}=\nabla^{\dagger}\nabla+V$ is \emph{separated} in $L^p(E)$ when
the following statement holds true: for all $u\in\textrm{D}^{\nabla}_p$ we have $Vu\in L^p(E)$.

\subsection{Statements of the Results} Our first result concerns the separation property for $L^{\nabla}_{V}$ in $L^2(E)$. Before giving its exact statement, we describe the assumptions on $V$.

\noindent\textbf{Assumption (A1)} Assume that

\begin{enumerate}

\item [(i)] $V\in C^1(\End E)$ and $V(x)\geq 0$, for all $x\in M$, where the inequality is
understood in the sense of linear operators $E_{x}\to E_{x}$;

\item[(ii)] $V$ satisfies the  inequality

\begin{equation}\label{E:assumption-2}
|(\nabla^{\End} V)(x)|\leq \beta (\underbar{V}(x))^{3/2},\qquad \textrm{for all }x\in M,
\end{equation}
where $0\leq \beta<1$ is a constant, $|\cdot|$ is the norm of a linear operator $E_{x}\to (T^*M\otimes E)_{x}$, and
$\underbar{V}\colon M\to \mathbb{R}$ is defined by
\[
\underbar{V}(x)=\min(\sigma(V(x))),
\]
where $\sigma(V(x))$ is the spectrum of the operator $V(x)\colon E_{x}\to E_{x}$.
\end{enumerate}

We are ready to state the first result.

\begin{thm}\label{T:main} Assume that $(M,g)$ is a smooth geodesically complete
connected Riemannian manifold without boundary. Let $E$ be a Hermitian vector bundle over $M$ with a metric covariant derivative $\nabla$. Assume that $V$ satisfies the assumption (A1). Then
\begin{equation}\label{E:to-prove}
\|\nabla^{\dagger}\nabla u\|_{2}+\|Vu\|_{2}\leq C(\|L_{V}u\|_{2}+\|u\|_{2}),
\end{equation}
for all $u\in \textrm{D}^{\nabla}_2$ ,where $C\geq 0$ is a constant (independent of $u$). In particular, $L^{\nabla}_{V}$ is separated in $L^2(E)$.
\end{thm}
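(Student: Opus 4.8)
The plan is to prove the coercive estimate \eqref{E:to-prove} first for compactly supported sections, where it reduces to an algebraic identity, and then to remove the compact-support restriction by a cutoff argument carefully designed to avoid any curvature hypothesis. For $u\in W^{2,2}_{\loc}(E)$ with compact support I would expand
\[
\|L^{\nabla}_{V}u\|_2^2=\|\nabla^{\dagger}\nabla u\|_2^2+\|Vu\|_2^2+2\RE(\nabla^{\dagger}\nabla u,Vu),
\]
and integrate by parts in the cross term. Using the Leibniz rule $\nabla(Vu)=(\nabla^{\End}V)u+V\nabla u$, this gives
\[
\RE(\nabla^{\dagger}\nabla u,Vu)=\RE(\nabla u,(\nabla^{\End}V)u)+\int_{M}\langle V\nabla u,\nabla u\rangle\,d\mu,
\]
where the last integral is nonnegative because $V\geq 0$. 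The role of Assumption (A1) is to control the first term pointwise: since $\langle V\nabla u,\nabla u\rangle\geq \underbar{V}|\nabla u|^2$ and $|Vu|\geq \underbar{V}|u|$, the bound \eqref{E:assumption-2} yields
\[
|\langle\nabla u,(\nabla^{\End}V)u\rangle|\leq \beta\,\underbar{V}^{3/2}|\nabla u|\,|u|\leq \beta\,\langle V\nabla u,\nabla u\rangle^{1/2}|Vu|
\]
on $\{\underbar{V}>0\}$, while both sides vanish where $\underbar{V}=0$. A weighted Young inequality absorbs this into the nonnegative term $\int_{M}\langle V\nabla u,\nabla u\rangle\,d\mu$ together with a small multiple of $\|Vu\|_2^2$; optimizing the weight leaves a factor $1-\beta^2/2$ in front of $\|Vu\|_2^2$, which is positive precisely because $\beta<1$. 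This establishes \eqref{E:to-prove} (in fact without the $\|u\|_2$ term) for compactly supported $u$.

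The main obstacle is the passage to a general $u\in\textrm{D}^{\nabla}_2$, for which neither $\nabla u$ nor $\nabla^{\dagger}\nabla u$ is a priori in $L^2(E)$, so the integrations by parts are not yet globally justified. Since $u\in\textrm{D}^{\nabla}_2$ gives $\nabla^{\dagger}\nabla u=L^{\nabla}_{V}u-Vu\in L^2_{\loc}(E)$, elliptic regularity for the Bochner Laplacian yields $u\in W^{2,2}_{\loc}(E)$. I would then localize using cutoffs $\chi_n\to 1$ with $0\le\chi_n\le 1$, compactly supported, and $|\nabla\chi_n|\le C$ uniformly; geodesic completeness guarantees such functions (e.g. built from the distance function to a fixed point). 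The crucial design choice is to insert $\chi_n^2$ as a \emph{weight}, so that only the bounded first derivative $\nabla\chi_n$ ever appears, and the Laplacian $\Del_M\chi_n$ — which cannot be controlled without a lower Ricci bound — never enters. This is exactly what lets the case $p=2$ run under geodesic completeness alone.

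Concretely I would argue in two stages. First, pairing $\nabla^{\dagger}\nabla u$ with $\chi_n^2 u$ and integrating by parts gives the IMS-type identity
\[
\|\chi_n\nabla u\|_2^2=\RE(\nabla^{\dagger}\nabla u,\chi_n^2 u)-2\RE\int_{M}\chi_n\langle\nabla_{\nabla\chi_n}u,u\rangle\,d\mu;
\]
bounding the last term by $\epsilon\|\chi_n\nabla u\|_2^2+\epsilon^{-1}C^2\|u\|_2^2$ and using $(Vu,\chi_n^2u)\ge 0$ to discard the potential, then letting $n\to\infty$, produces $\nabla u\in L^2(E)$ with a bound of the form $\|\nabla u\|_2^2\le C(\|L^{\nabla}_{V}u\|_2\|u\|_2+\|u\|_2^2)$ — this is the source of the $\|u\|_2$ term in \eqref{E:to-prove}. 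Second, I would repeat the expansion of the first paragraph applied to $\int_{M}\chi_n^2|L^{\nabla}_{V}u|^2\,d\mu$; the only extra term produced by the weight is $2\RE\int_{M}\chi_n\langle\nabla u,d\chi_n\otimes Vu\rangle\,d\mu$, which is estimated by $\delta\int_{M}\chi_n^2|Vu|^2\,d\mu+\delta^{-1}C^2\|\nabla u\|_2^2$ and absorbed using the finiteness of $\|\nabla u\|_2$ from the first stage. Choosing $\delta$ small enough that $1-2\delta-\beta^2/2>0$ and passing to the limit by monotone convergence gives \eqref{E:to-prove}. The separation statement is then immediate, since \eqref{E:to-prove} in particular forces $Vu\in L^2(E)$ for every $u\in\textrm{D}^{\nabla}_2$.
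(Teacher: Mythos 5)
Your proposal is correct, and its first half --- expanding $\|L^{\nabla}_{V}u\|_2^2$, integrating the cross term by parts via the Leibniz rule, and absorbing $(\nabla u,(\nabla^{\End}V)u)$ using (A1) together with a weighted Young inequality --- is essentially the paper's Lemma~\ref{L:lemma-1}, in a streamlined form that dispenses with the auxiliary parameter $\nu$ used there. Where you genuinely diverge is in the passage from compactly supported sections to all of $\textrm{D}^{\nabla}_2$. The paper invokes the essential self-adjointness of $L^{\nabla}_{V}|_{\ecomp}$ in $L^2(E)$, so that $\ecomp$ is a core for $H^{\nabla}_{2,V}$; it then approximates $u$ by $u_k\in\ecomp$ with $L^{\nabla}_{V}u_k\to L^{\nabla}_{V}u$, deduces from Lemma~\ref{L:lemma-1} that $\{Vu_k\}$, $\{\nabla^{\dagger}\nabla u_k\}$, $\{\nabla u_k\}$ are Cauchy, and identifies the limits distributionally. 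You instead work directly with $u\in\textrm{D}^{\nabla}_2\subset W^{2,2}_{\loc}(E)$ and rerun the identity with a weight $\chi_n^2$ built from first-order cutoffs, first extracting $\nabla u\in L^2$ from an IMS-type identity and then absorbing the one extra term $\chi_n\langle\nabla u,d\chi_n\otimes Vu\rangle$ into $\chi_n^2|Vu|^2$ and the now-finite $\|\nabla u\|_2^2$. Both routes ultimately rest on geodesic completeness (yours for cutoffs with uniformly bounded gradient, the paper's for the core property, whose proof uses the same cutoffs), but yours is more self-contained: it never needs the identification $\textrm{D}^{\nabla}_2=\Dom\bigl((L^{\nabla}_{V}|_{\ecomp})^{\sim}\bigr)$, at the cost of justifying integration by parts for $W^{2,2}_{\loc}$ sections against compactly supported $W^{1,2}$ weights --- a step the paper also relies on (via Lemma 8.8 of the Braverman--Milatovic--Shubin reference), so nothing new is being assumed. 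One small quibble: the constant $1-\beta^2/2$ you quote is positive for all $\beta<\sqrt{2}$, so ``positive precisely because $\beta<1$'' overstates the sharpness; the substantive point, that the coefficient of $\|Vu\|_2^2$ stays positive under (A1), is correct.
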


The second result concerns the separation for $L^{\nabla}_{V}$ in $L^p(E)$.

\begin{thm}\label{T:main-2} Let $1<p<\infty$. Assume that $(M,g)$ is a
geodesically complete connected Riemannian manifold without boundary. In the case $p\neq 2$, assume additionally that the Ricci curvature of $M$ is bounded from below by a constant. Furthermore, assume that
there exists a function $0\leq v\in C^1(M)$ such that
\begin{equation}\label{E:assumption-1}
v(x)I\leq V(x)\leq \delta v(x)I
\end{equation}
and
\begin{equation}\label{E:assumption-2}
|dv(x)|\leq \gamma v^{3/2}(x),\qquad \textrm{for all }x\in M,
\end{equation}
where $\delta\geq 1$ and $0\leq \gamma<2$ are constants, and $I\colon E_{x}\to E_{x}$ is the identity operator.
Then, the differential
expression $L^{\nabla}_{V}$ is separated in $L^p(E)$.
\end{thm}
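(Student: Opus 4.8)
The plan is to reduce separation to a coercive resolvent estimate for the bundle operator, to obtain that estimate by transferring the known scalar estimate through semigroup domination, and finally to pass from the range of the resolvent to the full maximal domain by means of $m$-accretivity. The reduction is immediate from (\ref{E:assumption-1}): since $V(x)$ is self-adjoint with $v(x)I\le V(x)\le\delta v(x)I$, the smallest and largest eigenvalues of $V(x)$ are pinched between $v(x)$ and $\delta v(x)$, giving the pointwise comparison $v|u|\le|Vu|\le\delta v|u|$. Hence $Vu\in L^{p}(E)$ is equivalent to $v|u|\in L^{p}(M)$, and separation follows once I establish
\[
\|Vu\|_{p}\le C(\|L^{\nabla}_{V}u\|_{p}+\|u\|_{p}),\qquad u\in\textrm{D}^{\nabla}_{p}.
\]

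For the scalar model I would invoke the coercive estimate (\ref{M:Del_bound}) for $\Del_{M}+v$ on $L^{p}(M)$ from \cite{Milatovic13-separation}, valid under (\ref{E:assumption-2}) with $\gamma<2$; rephrased via the resolvent, it states that $v(\Del_{M}+v+1)^{-1}$ is bounded on $L^{p}(M)$, so $\|v(\Del_{M}+v+1)^{-1}g\|_{p}\le C\|g\|_{p}$. In the case $p\neq2$ the only available proof passes through the Kato inequality and the $L^{p}$-positivity preservation of $M$, which is exactly where the lower Ricci bound is used. Next, the left-hand inequality in (\ref{E:assumption-1}) gives $V\ge vI$, so the domination of \cite{Guneysu-12} (Corollary~\ref{C:domination}) yields $|e^{-tL^{\nabla}_{V}}f|\le e^{-t(\Del_{M}+v)}|f|$ pointwise, whence, after a Laplace transform, $|(L^{\nabla}_{V}+1)^{-1}f|\le(\Del_{M}+v+1)^{-1}|f|$. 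Combining this with $|Vw|\le\delta v|w|$ and the scalar estimate gives, for every $f\in L^{p}(E)$,
\[
\|V(L^{\nabla}_{V}+1)^{-1}f\|_{p}\le\delta\,\|v\,(\Del_{M}+v+1)^{-1}|f|\,\|_{p}\le\delta C\|f\|_{p},
\]
which is the bundle coercive estimate of Proposition~\ref{L:c-e}, so far only for $u$ in the range of the resolvent.

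The final and hardest step is to upgrade this to the whole maximal domain $\textrm{D}^{\nabla}_{p}$. I would show that $L^{\nabla}_{V}$ is $m$-accretive in $L^{p}(E)$, so that the maximal operator on $\textrm{D}^{\nabla}_{p}$ coincides with the closure of $L^{\nabla}_{V}$ on $\ecomp$. Given $u\in\textrm{D}^{\nabla}_{p}$, put $f=(L^{\nabla}_{V}+1)u$ and $w=(L^{\nabla}_{V}+1)^{-1}f$; then $u-w\in L^{p}(E)$ satisfies $(L^{\nabla}_{V}+1)(u-w)=0$ in the distributional sense, and maximal accretivity forces $u=w$, placing $u$ in the range of the resolvent so that the displayed estimate applies and finishes the proof. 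I expect this $m$-accretivity to be the real obstacle, particularly for $p\ge3$, and it is the second reason the lower Ricci bound is imposed (see Section~\ref{SS:op-h}).
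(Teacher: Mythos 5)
Your proposal is correct and follows essentially the same route as the paper: the scalar coercive estimate of Proposition~\ref{L:c-e} makes $vR^{d}_{v}$ bounded, the domination of Corollary~\ref{C:domination} together with $V\leq\delta vI$ transfers this to boundedness of $VR^{\nabla}_{V}$, and the $m$-accretivity of the maximal operator $H^{\nabla}_{p,V}$ (Section~\ref{SS:op-h}) identifies every $u\in\textrm{D}^{\nabla}_{p}$ with $R^{\nabla}_{V}(u+L^{\nabla}_{V}u)$, which is exactly the paper's final step written as an injectivity argument rather than an algebraic identity.
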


\begin{rem}\label{R:rem-infty} \emph{From~(\ref{E:assumption-1}) it follows that
$0\leq V\in\lloc^{\infty}(\End E)$.}
\end{rem}

\section{Preliminaries on Operators}\label{SS:op-h} We start by briefly recalling some abstract terminology concerning $m$-accretive operators on Banach spaces.
A linear operator $T$ on a Banach space $\mathscr{B}$ is called \emph{accretive}
if
\[
\|(\xi+T)u\|_{\mathscr{B}}\geq \xi\|u\|_{\mathscr{B}},
\]
for all $\xi>0$ and all $u\in\Dom(T)$. By Proposition II.3.14 in~\cite{engel-nagel}, a densely defined accretive operator $T$ is closable and its closure $T^{\sim}$ is also accretive. A (densely defined) operator $T$ on $\mathscr{B}$ is called \emph{$m$-accretive} if it is accretive and $\xi+T$ is surjective for all $\xi>0$. A (densely defined)  operator $T$ on $\mathscr{B}$ is called \emph{essentially $m$-accretive} if it is accretive and $T^{\sim}$ is $m$-accretive. As the proof of our first result uses the notion of self-adjointness, we recall a link between $m$-accretivity and self-adjointness of operators on Hilbert spaces:  $T$ is a self-adjoint and non-negative operator if and only if $T$ is symmetric, closed, and $m$-accretive; see Problem V.3.32 in~\cite{Kato80}.

We now describe some known results on the (essential) $m$-accretivity of operators in $L^p(E)$ used in this paper. With $L^{\nabla}_{V}$ and $L^{d}_{v}$ as in section~\ref{SS:setting} and with $0\leq V\in\lloc^{\infty}(\End E)$ and $0\leq v\in\lloc^{\infty}(M)$,
we define an operator $H^{\nabla}_{p,V}$ as $H^{\nabla}_{p,V} u:=L^{\nabla}_{V}u$ with the domain $\textrm{D}^{\nabla}_{p}$ as in~(\ref{E:D-p}) and an operator
$H^{d}_{p,v}$ as $H^{d}_{p,v} u:=L^{d}_{v}u$ for all $u\in \textrm{D}^{d}_{p}$, where
\[
\textrm{D}^{d}_{p}:=\{u\in L^p(M)\colon L^{d}_{v}u \in  L^p(M)\}.
\]
For a geodesically complete manifold $M$ it is known that $(L^{d}_{v}|_{\mcomp})^{\sim}$ in $L^p(M)$, $1<p<\infty$, is $m$-accretive and it coincides with $H^{d}_{p,v}$. Moreover, under the same assumption on $M$ and for $1<p<3$, the operator $(L^{\nabla}_{V}|_{\ecomp})^{\sim}$ in $L^p(E)$, is $m$-accretive and it coincides with $H^{\nabla}_{p,V}$. Both of these statements are proven in~\cite{Strichartz-83} for $V=0$ and $v=0$, but the arguments there work for $0\leq V\in\lloc^{\infty}$ and $0\leq v\in\lloc^{\infty}$ without any change, as the non-negativity assumption makes $V$ and $v$ ``disappear" from the inequalities. It turns out that the $m$-accretivity result holds for $(L^{\nabla}_{V}|_{\ecomp})^{\sim}$ in $L^{p}(E)$ in the case $p\geq 3$ as well if, in addition to geodesic completeness, we assume that the Ricci curvature of $M$ is bounded from below by a constant. The latter statement was proven for manifolds of bounded geometry in Theorem 1.3 of~\cite{Mi-2010}, and it was observed in~\cite{GP-2017} that the statement holds if we just assume that $M$ is geodesically complete and with Ricci curvature bounded from below by a constant. For the explanation of why $(L^{\nabla}_{V}|_{\ecomp})^{\sim}$ coincides with $H^{\nabla}_{p,V}$, we again point the reader to~\cite{GP-2017}. As indicated above, in the case $p=2$, the term ``$m$-accretivity" in the above statements has the same meaning as the term ``self-adjointness."

\section{Proof of Theorem \ref{T:main}}  Working in the $L^2$-context only, we find it convenient to indicate by $\|\cdot\|$ and $(\cdot,\cdot)$ the norm and the inner product in the spaces $L^2(E)$ and $L^2(T^*M\otimes E)$.
In subsequent discussion, we adapt the approach from~\cite{Boimatov88, Everitt-Giertz77} to our setting.

\begin{lem}\label{L:lemma-1} Under the hypotheses of Theorem~\ref{T:main}, the following inequalities hold for all $u\in\ecomp$:
\begin{equation}\label{E:inequality-1}
\|\nabla^{\dagger}\nabla u\|+\|Vu\|\leq \widetilde{C}\|L_{V}u\|
\end{equation}
and
\begin{equation}\label{E:inequality-2}
\|V^{1/2}\nabla u\|\leq \widetilde{C}\|L_{V}u\|,
\end{equation}
where $L^{\nabla}_{V}$ is as in~(\ref{E:expression-L}), the notation $V^{1/2}$ means square root of the operator $V(x)\colon E_{x}\to E_{x}$, and $\widetilde{C}$ is a constant depending on $n=\dim M$, $m=\dim E_x$, and $\beta$.
\end{lem}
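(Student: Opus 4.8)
The plan is to establish the two inequalities for $u \in \ecomp$ by an integration-by-parts argument modeled on the Everitt--Giertz and Boimatov scheme. The starting point is to expand $\|L^{\nabla}_V u\|^2 = \|\nabla^{\dagger}\nabla u + Vu\|^2$. Writing this out gives
\[
\|L^{\nabla}_V u\|^2 = \|\nabla^{\dagger}\nabla u\|^2 + \|Vu\|^2 + 2\RE(\nabla^{\dagger}\nabla u, Vu).
\]
The crux is to show that the cross term $2\RE(\nabla^{\dagger}\nabla u, Vu)$ is, up to a controllable error, non-negative; more precisely, I expect to show it dominates something like $c\|V^{1/2}\nabla u\|^2$ minus a term absorbable by the first two squared norms. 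First I would integrate by parts to move one $\nabla$ off of $\nabla^{\dagger}\nabla u$, producing $(\nabla u, \nabla(Vu))$, and then use the product rule $\nabla(Vu) = (\nabla^{\End}V)u + V\nabla u$ (valid since $V \in C^1(\End E)$). This splits the cross term into a ``good'' piece $\RE(\nabla u, V\nabla u) = \|V^{1/2}\nabla u\|^2 \geq 0$, using $V \geq 0$ and self-adjointness of $V$, plus a ``bad'' piece $\RE(\nabla u, (\nabla^{\End}V)u)$ coming from the derivative of the potential.

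The main obstacle will be controlling that bad piece, and this is exactly where assumption (A1)(ii) enters. Using the Cauchy--Schwarz inequality fiberwise and the bound $|(\nabla^{\End}V)(x)| \leq \beta(\underbar{V}(x))^{3/2}$, I would estimate
\[
|\RE(\nabla u, (\nabla^{\End}V)u)| \leq \beta \int_M (\underbar{V})^{3/2}\,|\nabla u|\,|u|\,d\mu.
\]
The goal is then to interpolate this against the good term $\|V^{1/2}\nabla u\|^2$ and against $\|Vu\|^2$. The key algebraic inputs are the fiberwise operator inequalities $\underbar{V}(x)\,|w|^2 \leq \langle V(x)w, w\rangle$ (so $(\underbar{V})^{1/2}|\nabla u| \lesssim |V^{1/2}\nabla u|$ pointwise in a suitable averaged sense) and $(\underbar{V})^{2}|u|^2 \leq |Vu|^2$ (since $\underbar{V}$ is the least eigenvalue and $V \geq 0$). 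Rewriting $(\underbar{V})^{3/2} = (\underbar{V})^{1/2}\cdot \underbar{V}$, the integrand factors as $\big[(\underbar{V})^{1/2}|\nabla u|\big]\cdot\big[\underbar{V}|u|\big]$, and after applying Cauchy--Schwarz in $L^2(M)$ I obtain a bound of the form $\beta\,\|V^{1/2}\nabla u\|\,\|Vu\|$. This is the step I expect to require the most care, since passing from the spectral bound on $\underbar{V}$ to the $L^2$-norms involving the full operator $V$ relies on the pointwise spectral inequalities above, and one must be careful that $V^{1/2}$ and $V$ act correctly on the relevant sections.

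With the cross term controlled, the inequality becomes
\[
\|\nabla^{\dagger}\nabla u\|^2 + \|Vu\|^2 + 2\|V^{1/2}\nabla u\|^2 \leq \|L^{\nabla}_V u\|^2 + 2\beta\,\|V^{1/2}\nabla u\|\,\|Vu\|.
\]
Since $0 \leq \beta < 1$, I would apply Young's inequality $2\beta ab \leq \beta a^2 + \beta b^2$ (or a weighted variant) to the right-hand side to absorb $\|V^{1/2}\nabla u\|^2$ into the left and leave a strictly positive multiple of $\|Vu\|^2$. The strict inequality $\beta < 1$ is precisely what guarantees the absorption leaves positive coefficients, yielding both $\|Vu\| \leq \widetilde{C}\|L^{\nabla}_V u\|$ and $\|V^{1/2}\nabla u\| \leq \widetilde{C}\|L^{\nabla}_V u\|$, which is (\ref{E:inequality-2}). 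Finally, $\|\nabla^{\dagger}\nabla u\| = \|L^{\nabla}_V u - Vu\| \leq \|L^{\nabla}_V u\| + \|Vu\| \leq \widetilde{C}'\|L^{\nabla}_V u\|$ by the triangle inequality, completing (\ref{E:inequality-1}). The constant $\widetilde{C}$ will depend on $\beta$ and possibly on dimensional constants $n, m$ entering the fiberwise Cauchy--Schwarz estimates.
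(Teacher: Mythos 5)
Your proposal is correct and follows the same basic scheme as the paper's proof: expand $\|L^{\nabla}_{V}u\|^{2}$ for $u\in\ecomp$, integrate by parts in the cross term, apply the product rule $\nabla(Vu)=(\nabla^{\End}V)u+V\nabla u$, and control $\RE(\nabla u,(\nabla^{\End}V)u)$ via assumption (A1)(ii) together with the pointwise spectral inequalities $(\underbar{V})^{1/2}|\nabla u|\leq|V^{1/2}\nabla u|$ and $\underbar{V}\,|u|\leq|Vu|$. The one structural difference is that the paper first splits $\|\nabla^{\dagger}\nabla u\|^{2}=\nu\|\nabla^{\dagger}\nabla u\|^{2}+(1-\nu)\RE(\nabla^{\dagger}\nabla u,L^{\nabla}_{V}u-Vu)$, which introduces the extra term $(1-\nu)\RE(\nabla^{\dagger}\nabla u,L^{\nabla}_{V}u)$, an additional Cauchy--Schwarz step, and a final verification that the three constraints~(\ref{E:constants}) on the auxiliary parameters $\nu,\alpha,\delta$ can be met when $\beta<1$. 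Your version dispenses with all of this: because you keep the sharp constant $\beta$ in the estimate $|\RE(\nabla u,(\nabla^{\End}V)u)|\leq\beta\|V^{1/2}\nabla u\|\,\|Vu\|$ (the paper's bound~(\ref{E:long-1}) uses the looser factor $\beta+1$, which is why it cannot absorb directly and needs the free parameters), a single application of Young's inequality $2\beta ab\leq\beta a^{2}+\beta b^{2}$ yields $\|L^{\nabla}_{V}u\|^{2}\geq\|\nabla^{\dagger}\nabla u\|^{2}+(1-\beta)\|Vu\|^{2}+(2-\beta)\|V^{1/2}\nabla u\|^{2}$, with all coefficients positive precisely because $0\leq\beta<1$. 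This gives both~(\ref{E:inequality-1}) and~(\ref{E:inequality-2}) with explicit constants and is, if anything, cleaner than the printed argument; the only point to make fully explicit in a write-up is that $V$ acts on $T^{*}M\otimes E$ as $1\otimes V$, so that $\underbar{V}(x)$ is also the bottom of the spectrum of that extension and the fiberwise inequality $(\underbar{V})^{1/2}|\nabla u|\leq|V^{1/2}\nabla u|$ is exact, not merely ``in an averaged sense.''
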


\begin{proof} %We will first prove that the following equality holds for all $\nu>0$ and $u\in\ecomp$:
%\begin{align}\label{E:equality}
%&\|L^{\nabla}_{V}u\|^2  =  \|Vu\|^2  +  \nu\|\nabla^{\dagger}\nabla u\|^2  +
%(1+\nu)\RE(Vu, \nabla^{\dagger}\nabla u) \nonumber\\
%&+ (1-\nu)(\nabla^{\dagger}\nabla u,L^{\nabla}_{V}u).
%\end{align}
By the definition of $L^{\nabla}_{V}$, for all $\nu>0$ and all
$u\in\ecomp$ we have
\begin{align}\label{E:equality}
&\|L^{\nabla}_{V}u\|^2  =  \|Vu\|^2+ \|\nabla^{\dagger}\nabla u\|^2+2\RE(\nabla^{\dagger}\nabla u,Vu)  \nonumber\\
&= \|Vu\|^2+
\nu\|\nabla^{\dagger}\nabla u\|^2+(1-\nu)\|\nabla^{\dagger}\nabla u\|^2+
+2\RE(\nabla^{\dagger}\nabla u,Vu)  \nonumber\\
&=  \|Vu\|^2+
\nu\|\nabla^{\dagger}\nabla u\|^2
+(1-\nu)\RE(\nabla^{\dagger}\nabla u,L^{\nabla}_{V}u-Vu)+2\RE(\nabla^{\dagger}\nabla u,Vu) \nonumber\\
&= \|Vu\|^2+\nu\|\nabla^{\dagger}\nabla u\|^2+(1-\nu)\RE(\nabla^{\dagger}\nabla u,L^{\nabla}_{V}u)+(1+\nu)\RE(\nabla^{\dagger}\nabla u,Vu),
\end{align}
Using integration by parts and the ``product
rule"
\[
\nabla(Vu)= (\nabla^{\End} V)u+V\nabla u,
\]
for all $u\in\ecomp$ we have
\begin{align}\label{E:long}
&\RE (\nabla^{\dagger}\nabla u, Vu)=  \RE (\nabla u,\nabla (Vu))  = \RE (\nabla u, (\nabla^{\End} V)u+V\nabla u)\nonumber\\
&=   \RE (\nabla u,(\nabla^{\End} V)u)+(\nabla u,V\nabla u)=   (\RE Z) + W,
\end{align}
where
\[
Z:=(\nabla u, (\nabla^{\End} V)u)
\]
and
\begin{equation}\nonumber
W:=(\nabla u,V\nabla u)=(V^{1/2}\nabla u,V^{1/2}\nabla u).
\end{equation}
From~(\ref{E:long}) we get
\begin{align}\label{E:long-1+nu}
&(1+\nu)\RE(\nabla^{\dagger}\nabla u,Vu)=(1+\nu)\RE Z+(1+\nu)W \nonumber\\
&\geq  -(1+\nu)|Z|+(1+\nu)W.
\end{align}
Using~(\ref{E:assumption-2}) and
\begin{equation}\label{E:helping}
2ab\leq ka^2+k^{-1}b^2,
\end{equation}
where $a$, $b$ and $k$ are positive real numbers, we obtain
\begin{align}\label{E:long-1}
&|Z| \leq  \ (\beta+1)\int_{M}|{\underbar{V}}^{1/2}
\nabla u|_{(T^*M\otimes E)_{x}}|\underbar{V}u|_{E_{x}}\,d\mu  \nonumber\\
&\leq  \frac{\nu\delta}{2}\|V^{1/2}\nabla u\|^2+\frac{(\beta+1)^2}{2\nu\delta}\|Vu\|^2,
\end{align}
for all $\delta>0$. Using~(\ref{E:helping}) again, we get
\begin{align}\label{E:long-2}
&|\RE (\nabla^{\dagger}\nabla u, L^{\nabla}_{V}u)|  \leq  |(\nabla^{\dagger}\nabla u, L^{\nabla}_{V}u)| \leq  \frac{\alpha}{2}\|\nabla^{\dagger}\nabla u\|^2+\frac{1}{2\alpha}\|L^{\nabla}_{V}u\|^2,
\end{align}
for all $\alpha>0$. Combining~(\ref{E:equality}),~(\ref{E:long-1+nu}),~(\ref{E:long-1})
and~(\ref{E:long-2}) we obtain
\begin{align}\nonumber
&\|L^{\nabla}_{V}u\|^2 \ \geq \|Vu\|^2 + \nu\|\nabla^{\dagger}\nabla u\|^2
-\frac{(1+\nu)\nu\delta}{2}\|V^{1/2}\nabla u\|^2-\frac{(1+\nu)(\beta+1)^2}{2\nu\delta}\|Vu\|^2\nonumber\\
&+(1+\nu)\|V^{1/2}\nabla u\|^2-\frac{|1-\nu|\alpha}{2}\|\nabla^{\dagger}\nabla u\|^2
-\frac{|1-\nu|}{2\alpha}\|L^{\nabla}_{V}u\|^2,\nonumber
\end{align}
which upon rearranging leads to
\begin{align}\nonumber
&\left(1+\frac{|1-\nu|}{2\alpha}\right)\|L^{\nabla}_{V}u\|^2 \geq
\left(1-\frac{(1+\nu)(\beta+1)^2}{2\nu\delta}\right)\|Vu\|^2 \nonumber\\
&+\left(\nu-\frac{|1-\nu|\alpha}{2}\right)\|\nabla^{\dagger}\nabla u\|^2
+
\left((1+\nu)-\frac{(1+\nu)\nu\delta}{2}\right)\|V^{1/2}\nabla u\|^2. \nonumber
\end{align}

Finally, we observe that~(\ref{E:inequality-1}) and~(\ref{E:inequality-2}) will
follow from the last inequality if
\begin{equation}\label{E:constants}
|1-\nu|<\frac{2\nu}{\alpha}, \qquad \nu\delta<2,
\qquad\textrm{and}\qquad (1+\nu)(\beta+1)^2<2\nu\delta.
\end{equation}
Since, by hypothesis, $0\leq\beta<1$, there exist numbers $\nu>0$,
$\alpha>0$ and $\delta>0$ such that the
inequalities~(\ref{E:constants}) hold.
\end{proof}

\bigskip

\noindent{\textbf{Continuation of the Proof of Theorem~\ref{T:main}}
As indicated in section~\ref{SS:op-h}, the operator $L^{\nabla}_{V}|_{\ecomp}$ is essentially self-adjoint and $(L^{\nabla}_{V}|_{\ecomp})^{\sim}=H^{\nabla}_{2,V}$.
We will show that~(\ref{E:inequality-1}) and~(\ref{E:inequality-2}) hold for
all $u\in \textrm{D}^{\nabla}_2=\Dom(H^{\nabla}_{2,V})$, from which~(\ref{E:to-prove}) follows directly.

As $H^{\nabla}_{2,V}$ is a closed operator, there exists a sequence $\{u_k\}$ in
$\ecomp$ such that $u_k\to u$ and $L^{\nabla}_{V}u_k\to H^{\nabla}_{2,V}$ in $L^2(E)$. By Lemma~\ref{L:lemma-1} the sequence $\{u_k\}$
satisfies~(\ref{E:inequality-1}) and~(\ref{E:inequality-2}); hence, $\{Vu_k\}$, $\{\nabla^{\dagger}\nabla u_k\}$, and $\{V^{1/2}\nabla u_k\}$ are Cauchy sequences in the appropriate $L^2$-space (corresponding to $E$ or $T^*M\otimes E$). Furthermore, $\{\nabla u_k\}$ is a Cauchy sequence in $L^2(T^*M\otimes E)$ because
\[
\|\nabla u_k\|^2=(\nabla u_k, \nabla u_k)=(\nabla^{\dagger}\nabla u_k, u_k)\leq \|\nabla^{\dagger}\nabla u_k\|\|u_k\|.
\]
It remains to show that $Vu_k\to Vu$, $V^{1/2}\nabla u_k\to V^{1/2}\nabla u$, and $\nabla u_k\to \nabla u$ in the appropriate $L^2$-space.
As the proofs of these three convergence relations follow the same pattern, we will only show the details for the third one. We start by observing that from the essential self-adjointness of $\nabla^{\dagger}\nabla|_{\ecomp}$ we get $\nabla^{\dagger}\nabla u_k\to \nabla^{\dagger}\nabla u$ in $L^2(E)$. Since $\{\nabla u_k\}$ is a Cauchy sequence in $L^2(T^*M\otimes E)$, it follows that $\nabla u_k$ converges to some element $\omega\in
L^2(T^*M\otimes E)$. Then, for all $\psi\in C_{c}^{\infty}(T^*M\otimes E)$ we have
\begin{equation}\nonumber
0 = (\nabla u_k,\psi)-(u_k,\nabla^{\dagger}\psi)\to (\omega,\psi)- (u,\nabla^{\dagger}\psi)=(\omega,\psi)-(\nabla u,\psi),
\end{equation}
where in the second equality we used integration by parts (see, for instance, Lemma 8.8 in~\cite{bms}), which is applicable because elliptic regularity tells us that $\Dom(H^{\nabla}_{2,V})\subset W^{2,2}_{\loc}(E)$.

With the three convergence relations at our disposal, taking the limit as $k\to\infty$ in all terms
in~(\ref{E:inequality-1}) and~(\ref{E:inequality-2}) (with $u$ replaced
by $u_k$) shows that~(\ref{E:inequality-1}) and~(\ref{E:inequality-2})
hold for all $u\in \textrm{D}^{\nabla}_2=\Dom(H^{\nabla}_{2,V})$. $\hfill\square$

\section{Proof of Theorem \ref{T:main-2}}

\subsection{Semigroup Representation Formula.}\label{S:semi-group}
Assuming that $M$ is geodesically complete (with Ricci curvature bounded from below by a constant in the case $p\geq 3$) and $0\leq V\in\lloc^{\infty}(\End E)$, the operator $H^{\nabla}_{p,V}$ is $m$-accretive (see section~\ref{SS:op-h}), and its negative, $-H^{\nabla}_{p,V}$, generates a strongly continuous contraction semigroup $S_t$ on $L^p(E)$, $1<p<\infty$; see abstract Theorem II.3.15 in~\cite{engel-nagel}.

Before stating a crucial proposition for the proof of Theorem~\ref{T:main-2}, we describe a probabilistic setting. In the subsequent discussion, we assume that the underlying filtered probability space $(\Omega,\mathscr{F},\mathscr{F}_{*},\mathbb{P})$, where $\mathscr{F}_{*}$ is right-continuous and the pair $(\mathbb{P},\mathscr{F}_{t})$ is complete in measure theoretic sense for all $t\geq 0$, carries a Brownian motion $W$ on $\mathbb{R}^{l}$ sped up by 2, that is,  $d[W_{t}^{j},W_{t}^{k}]=2\delta_{jk}\,dt$, where $\delta_{jk}$ is the Kronecker delta and $l\geq n=\dim M$ is sufficiently large.  We will also assume $\mathscr{F}_{*}=\mathscr{F}_{*}(W)$.  Let $B_{t}(x)\colon \Omega \times [0,\zeta(x))\to M$ be a Brownian motion starting at $x\in M$ with lifetime $\zeta(x)$. It is well known that this process can be constructed as the maximally defined solution of the Stratonovich equation
\[
dB_{t}(x)=\sum_{j=1}^{l}A_{j}(B_{t}(x))\underbar{d}W_{t}^{j},\quad B_{0}(x)=x,
\]
where $A_{j}$ are smooth vector fields on $M$ such that $\displaystyle\sum_{j=1}^{l}A^2_{j}=\Delta_{M}$.
\begin{remark} In the setting of Theorem~\ref{T:main-2}, for $p\neq 2$ our assumptions $M$ imply that $M$ is stochastically complete; hence, in this case we have $\zeta(x)=\infty$.
\end{remark}
In the sequel,  $\slash\slash_{t}^{x}\colon E_{x}\to  E_{B_{t}(x)}$ stands for the stochastic parallel transport corresponding to the covariant derivative $\nabla$ on $E$. Additionally, the symbol $\mathscr{V}^{x}_{t}$ stands for the $\End E_{x}$-valued process (with lifetime $\zeta(x)$) defined as the unique pathwise solution to
\[
d\mathscr{V}^{x}_{t}=-\mathscr{V}^{x}_{t}(\slash\slash^{t,-1}_{x}V(B_{t}(x)))\slash\slash_{t}^{x})\,dt, \qquad \mathscr{V}^{x}_{0}=I,
\]
where $\slash\slash^{t,-1}_{x}$ is the inverse of $\slash\slash_{t}^{x}$ and $I$ is the identity endomorphism.

We now state the proposition, which in the $p=2$ context is a special case of Theorem 1.3 in~\cite{Guneysu-10}. For an extension to possibly negative $V$, in the case $p=2$, see Theorem 2.11 in~\cite{Guneysu-12}. The proof of Theorem 1.3 in~\cite{Guneysu-10} is almost entirely applicable to the proposition below. Thus, we will only explain those parts in need of small changes to accommodate the general $1<p<\infty$.

\begin{prop}\label{P:FK-rep}  Let $1<p<\infty$, let $M$ be a (smooth) connected Riemannian manifold without boundary, and let $E$, $\nabla$ be as in Theorem~\ref{T:main-2}. Assume that $M$ is geodesically complete. In the case $p\geq 3$, assume additionally that the Ricci curvature of $M$ is bounded from below by a constant.  Assume that $V\in\lloc^{\infty}(\End E)$ satisfies the inequality $V(x)\geq 0$, for all $x\in M$. Let $S_t$ be the semigroup defined in section~\ref{S:semi-group}. Then, we have the representation
\begin{equation}\label{E:F-K}
(S_{t}f)(x)=\mathbb{E}\left[\mathscr{V}^{x}_{t}\slash\slash^{t,-1}_{x}f(B_{t}(x))1_{\{t<\zeta(x)\}} \right],
\end{equation}
for all $f\in L^p(E)$.
\end{prop}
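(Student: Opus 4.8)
The plan is to show that the right-hand side of~(\ref{E:F-K}) defines, for each $t\geq 0$, an operator $T_{t}$ on $L^{p}(E)$ via $(T_{t}f)(x):=\mathbb{E}[\mathscr{V}^{x}_{t}\slash\slash^{t,-1}_{x}f(B_{t}(x))1_{\{t<\zeta(x)\}}]$, to verify that $\{T_{t}\}$ is a strongly continuous contraction semigroup whose generator is $-H^{\nabla}_{p,V}$, and then to invoke the uniqueness of the semigroup generated by a given operator: since $-H^{\nabla}_{p,V}$ generates $S_{t}$ (section~\ref{S:semi-group}), this forces $T_{t}=S_{t}$, which is exactly~(\ref{E:F-K}). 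Because the pathwise construction of $B_{t}$, $\slash\slash^{x}_{t}$ and $\mathscr{V}^{x}_{t}$, the cocycle identities they satisfy, and the application of It\^o's formula are all independent of $p$, I would follow the proof of Theorem 1.3 in~\cite{Guneysu-10} essentially verbatim, isolating only the three functional-analytic points below, each of which amounts to replacing an $L^{2}$-statement by its $L^{p}$-analogue.

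First I would establish well-definedness and $L^{p}$-contractivity through a domination bound. Since $V(x)\geq 0$ and $\slash\slash^{x}_{t}$ is a fibrewise isometry, the endomorphism $\slash\slash^{t,-1}_{x}V(B_{t}(x))\slash\slash^{x}_{t}$ is self-adjoint and non-negative on $E_{x}$, so the pathwise linear equation defining $\mathscr{V}^{x}_{t}$ gives the operator-norm bound $|\mathscr{V}^{x}_{t}|\leq 1$. Together with $|\slash\slash^{t,-1}_{x}|=1$ this yields the pointwise estimate
\[
|(T_{t}f)(x)|\leq \mathbb{E}\left[|f(B_{t}(x))|\,1_{\{t<\zeta(x)\}}\right]=(e^{-t\Delta_{M}}|f|)(x),
\]
where $e^{-t\Delta_{M}}$ is the minimal heat semigroup on $M$, generated by $-\Delta_{M}$. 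As $e^{-t\Delta_{M}}$ is a contraction on every $L^{p}(M)$, $1\leq p\leq\infty$, it follows that $T_{t}f\in L^{p}(E)$ with $\|T_{t}f\|_{p}\leq\|f\|_{p}$. This single estimate is the $L^{p}$-substitute for the $L^{2}$-bound used in~\cite{Guneysu-10}; it also drives the strong continuity $T_{t}f\to f$ in $L^{p}(E)$ as $t\to 0^{+}$, which I would prove first for $f\in\ecomp$ (the pointwise convergence $B_{t}(x)\to x$, $\slash\slash^{x}_{t}\to\Id$, $\mathscr{V}^{x}_{t}\to I$ combined with $e^{-t\Delta_{M}}|f|\to|f|$ in $L^{p}$ via the generalized dominated convergence theorem) and then for all of $L^{p}(E)$ by density and uniform boundedness.

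The semigroup law $T_{s+t}=T_{s}T_{t}$ is identical to the $p=2$ case, following from the multiplicativity of $\mathscr{V}$ and $\slash\slash$ along Brownian paths and the Markov property of $B_{t}$. The decisive step is the identification of the generator on the core $\ecomp$. For $f\in\ecomp$ I would apply It\^o's formula to $\mathscr{V}^{x}_{t}\slash\slash^{t,-1}_{x}f(B_{t}(x))$; the stochastic integral is a genuine (not merely local) martingale because $f$ has compact support and $|\mathscr{V}^{x}_{t}\slash\slash^{t,-1}_{x}|\leq 1$, so taking expectations removes it and yields
\[
(T_{t}f)(x)-f(x)=-\int_{0}^{t}(T_{s}L^{\nabla}_{V}f)(x)\,ds.
\]
Dividing by $t$ and letting $t\to 0^{+}$, the convergence of the difference quotient to $-L^{\nabla}_{V}f$ must now be read in $L^{p}(E)$-norm: the domination bound applied to $L^{\nabla}_{V}f\in\ecomp$ provides an $L^{p}$-integrable majorant uniform for small $t$, so dominated convergence applies in $L^{p}$. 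Writing $B$ for the generator of $T_{t}$, this shows $\ecomp\subset\Dom(B)$ with $Bf=-L^{\nabla}_{V}f$. To conclude, I would use the $m$-accretivity results of section~\ref{SS:op-h}: there $H^{\nabla}_{p,V}=(L^{\nabla}_{V}|_{\ecomp})^{\sim}$ is $m$-accretive on $L^{p}(E)$ for the stated range of $p$ (it is precisely here, for $p\geq 3$, that the lower Ricci bound is used). Since $B$ is closed and satisfies $Bf=-L^{\nabla}_{V}f$ on $\ecomp$, the operator $-B$ is closed and extends $L^{\nabla}_{V}|_{\ecomp}$, hence extends its closure $H^{\nabla}_{p,V}$; as $T_{t}$ is a contraction semigroup, $-B$ is itself accretive, and an $m$-accretive operator admits no proper accretive extension. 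Hence $-B=H^{\nabla}_{p,V}$, so $B=-H^{\nabla}_{p,V}$ and $T_{t}=e^{-tH^{\nabla}_{p,V}}=S_{t}$.

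I expect the main obstacle to be the third step, namely ensuring that the difference-quotient limit identifying the generator (and, to a lesser extent, the strong-continuity limit) genuinely holds in the $L^{p}$-norm rather than only pointwise or in $L^{2}$; the pointwise domination by $e^{-t\Delta_{M}}|L^{\nabla}_{V}f|$ is what must be leveraged to supply a uniform $L^{p}$-majorant. A secondary point requiring care, in the range $1<p<3$ where $M$ need not be stochastically complete, is to carry the indicator $1_{\{t<\zeta(x)\}}$ through every identity and to verify that the It\^o integral, after stopping at the lifetime $\zeta$, is a true martingale so that its expectation vanishes.
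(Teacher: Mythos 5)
Your proposal is sound in outline but follows a genuinely different route from the paper's in two respects. First, the identification step: you establish that $T_{t}$ is itself a strongly continuous contraction semigroup, compute its generator $B$ on $\ecomp$, and then use the fact that the $m$-accretive operator $H^{\nabla}_{p,V}=(L^{\nabla}_{V}|_{\ecomp})^{\sim}$ admits no proper accretive extension to force $-B=H^{\nabla}_{p,V}$. The paper never identifies the generator of $Q_{t}$ (its $T_{t}$) and never needs the semigroup law or strong continuity for it: it only proves $L^{p}$-boundedness of $Q_{t}$, derives the integral identity $Q_{t}\psi=\psi-\int_{0}^{t}Q_{s}H^{\nabla}_{p,V}\psi\,ds$ on $\ecomp$, and concludes $Q_{t}\psi=S_{t}\psi$ because both families solve the same Cauchy problem. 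Your version buys a cleaner conceptual statement (and makes explicit where the Ricci bound enters for $p\geq 3$), at the cost of more verification; the paper's is more economical. Second, and more importantly, the paper does \emph{not} run the It\^o-formula argument directly for $V\in\lloc^{\infty}(\End E)$: it first treats $0\leq V\in C(\End E)\cap L^{\infty}(\End E)$, where the derivation of the integral identity is unproblematic, then passes to bounded measurable $V$ via Friedrichs mollification (Lemma 3.1 of~\cite{Guneysu-10}) and the Kato--Trotter theorem, and finally to $\lloc^{\infty}$ by a further exhaustion. This staging is not cosmetic: for merely measurable $V$ the identity $(T_{t}f)(x)-f(x)=-\int_{0}^{t}(T_{s}L^{\nabla}_{V}f)(x)\,ds$ that you extract from It\^o's formula requires justification (the drift term involves $V(B_{s}(x))$, defined only for a.e.\ $s$ and a.e.\ $x$, and $L^{\nabla}_{V}f$ is no longer continuous), and this is exactly the gap the approximation closes. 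If you intend to argue directly for locally bounded $V$, you must either supply that justification or fold the same approximation into your scheme. Your pointwise domination $|(T_{t}f)(x)|\leq (e^{-t\Delta_{M}}|f|)(x)$, exploiting $V\geq 0$ to get $|\mathscr{V}^{x}_{t}|\leq 1$, is correct and is actually sharper than the paper's crude bound $e^{pt\|V\|_{\infty}}\|h\|_{p}^{p}$, which is only available for bounded $V$ and is another reason the paper must approximate.
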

\begin{proof} We first assume that $0\leq V\in C(\End E)\cap L^{\infty}(\End E)$.
Denoting by $L^{0}(E)$ Borel measurable sections, define a family of operators $Q_{t}\colon L^{p}(E)\to L^{0}(E)$, $t\geq 0$, as
\[
(Q_{t}h)(x):=\mathbb{E}\left[\mathscr{V}^{x}_{t}\slash\slash^{t,-1}_{x}h(B_{t}(x))1_{\{t<\zeta(x)\}} \right].
\]
We will show that $Q_{t}$ are bounded operators $L^{p}(E)\to L^{p}(E)$. Using H\"{o}lder's inequality, on letting $q$ be the dual exponent to $p$, we can estimate
\begin{align*}
\|Q_th\|_{p}^p &\leq e^{pt\|V\|_{\infty}}\int_{M}\mathbb{E}[|h(B_t(x))|_{B_t(x)}]^p\,d\mu(x) \\
           &= e^{pt\|V\|_{\infty}}\int_M\bigg{(} \int_M|h(y)|_y\rho_t(x,y)\,d\mu(y)\bigg{)}^p \,d\mu(x) \\
           &\leq e^{pt\|V\|_{\infty}}\int_M\int_M|h(y)|^p_y\rho_t(x,y)\,d\mu(y)
           \bigg{(}\int_M\rho_t(x,z)\,d\mu(z) \bigg{)}^{p/q}\,d\mu(x) \\
           &\leq e^{pt\|V\|_{\infty}}\|h\|_{p}^p
\end{align*}
where $\rho_t(x,y)$ denotes the minimal heat kernel of $M$. It follows that $Q_t\colon L^p(E)\to L^p(E)$ are bounded operators for all $t \geq 0$.

As shown in the discussion following equation (17) in \cite{Guneysu-10}, the operator $Q_t$ satisfies
the equation
\begin{equation*}
(Q_t\psi)(x) = \psi(x) - \int_0^tQ_sH^{\nabla}_{p, V}\psi(x)ds
\end{equation*}
for all $\psi \in C_c^{\infty}(E)$.

Therefore $Q_t$ solves the following differential equation
\begin{equation}\nonumber
\frac{dQ_t}{dt}\psi = -Q_tH^{\nabla}_{p, V}\psi,\qquad Q_0\psi = \psi,
\end{equation}
for all $\psi \in C_c^{\infty}(E)$.

On the other hand, by lemma II.1.3 (ii) in \cite{engel-nagel}, the semi-group $S_t$ satisfies the same equation
\begin{equation}\nonumber
\frac{dS_t}{dt}\psi = -S_tH^{\nabla}_{p, V}\psi, \qquad
 S_0\psi = \psi,
\end{equation}
for all $\psi \in C_c^{\infty}(E)$. Hence, $Q_t\psi = S_t\psi$ for all $\psi \in C_c^{\infty}(E)$, and thus $Q_tf = S_tf$ for all $f \in L^p(E)$.
This proves the proposition in the case that $0\leq V\in C(\End E)\cap L^{\infty}(\End E)$.

Now assume $0\leq V \in L^{\infty}(\End E)$. By Lemma 3.1 of \cite{Guneysu-10}, we can find a sequence
$0\leq V_k \in  C(\End E)\cap L^{\infty}(\End E)$ such that for all $\psi \in C_c^{\infty}(E)$ we have
\begin{equation*}
\|H^{\nabla}_{p, V_k}\psi - H^{\nabla}_{p, V}\psi\|_p \rightarrow 0
\end{equation*}
as $k \rightarrow \infty$. Denote by $S_t^k$ the (strongly continuous, contractive) semigroup in $L^p(E)$ generated by $-H^{\nabla}_{p, V_k}$. As $C_c^{\infty}(E)$ is a common core for $H^{\nabla}_{p, V_k}$ and $H^{\nabla}_{p, V}$, it follows from the abstract Kato--Trotter
theorem, see Theorem III.4.8 in \cite{engel-nagel}, that $S^k_tf \rightarrow S_tf$ in $L^p(E)$, for all $f \in L^p(E)$, $1 < p < \infty$. From here, the proof of Theorem 1.1 in \cite{Guneysu-10} applies to obtain the formula \eqref{E:F-K} for
$0\leq V \in L^{\infty}(\End E)$.

The case of $0\leq V \in \lloc^{\infty}(\End E)$ proceeds in exactly the same way as Theorem 1.3 in \cite{Guneysu-10}.
\end{proof}

Before stating a corollary concerning the resolvent domination, we introduce the resolvent notations:
\begin{align*}
R^{\nabla}_V &:= (H^{\nabla}_{p,V} + 1)^{-1} : L^p(E) \rightarrow L^p(E), \\
R^{d}_{v} &:= (H^{d}_{p,v} + 1)^{-1} : L^p(M) \rightarrow L^p(M).
\end{align*}

With the formula~(\ref{E:F-K}) and the assumption $V\geq vI$ at our disposal, the proof of the following corollary is the same as that of property (iv) in Theorem 2.13 of~\cite{Guneysu-12}.
\begin{cor}\label{C:domination} Let $M$, $\nabla$, and $E$ be as in Proposition \ref{P:FK-rep}. Assume that $V\in\lloc^{\infty}(\End E)$ satisfies the inequality $V(x)\geq v(x)I$, for all $x\in M$, where $0\leq v\in\lloc^{\infty}(M)$. Then, for all $f \in L^p(E)$, $1<p<\infty$, we have
\begin{equation}\label{E:domination}
| R^{\nabla}_Vf(x) |_{E_{x}} \leq R^{d}_{v}  |f(x)|.
\end{equation}
\end{cor}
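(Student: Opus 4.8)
The plan is to deduce the pointwise resolvent bound from a pointwise domination of the underlying semigroups, using the Feynman--Kac representation of Proposition~\ref{P:FK-rep}. Let $S_t$ denote the semigroup generated by $-H^{\nabla}_{p,V}$ and let $T_t$ denote the scalar semigroup generated by $-H^{d}_{p,v}$ (the latter being the special case $E=M\times\mathbb{C}$, $\nabla=d$, $V=v$ of the present setting). Since $-H^{\nabla}_{p,V}$ and $-H^{d}_{p,v}$ generate strongly continuous contraction semigroups, the resolvents at $1$ admit the Laplace-transform representations
\begin{equation*}
R^{\nabla}_{V}f=\int_{0}^{\infty}e^{-t}S_{t}f\,dt,\qquad R^{d}_{v}h=\int_{0}^{\infty}e^{-t}T_{t}h\,dt,
\end{equation*}
as Bochner integrals in $L^p(E)$ and $L^p(M)$ respectively. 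Consequently, once I establish the pointwise semigroup domination $|S_tf(x)|_{E_x}\leq T_t|f|(x)$ for a.e.\ $x$ and every $t\geq 0$, I would pass the fiber norm inside the integral and use the positivity of $T_t$ to obtain~(\ref{E:domination}) directly.

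The core step is therefore the semigroup domination. Applying Proposition~\ref{P:FK-rep} to $S_t$ and moving the fiber norm inside the expectation gives
\begin{equation*}
|S_tf(x)|_{E_x}\leq \mathbb{E}\left[|\mathscr{V}^{x}_{t}|_{\End E_x}\,|\slash\slash^{t,-1}_{x}f(B_t(x))|_{E_x}\,1_{\{t<\zeta(x)\}}\right],
\end{equation*}
where $|\mathscr{V}^{x}_{t}|_{\End E_x}$ denotes the operator norm. Because the stochastic parallel transport $\slash\slash^{t,-1}_{x}$ is a fiberwise isometry, $|\slash\slash^{t,-1}_{x}f(B_t(x))|_{E_x}=|f(B_t(x))|_{E_{B_t(x)}}$. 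On the other hand, the scalar Feynman--Kac formula (the $V=v$ case of Proposition~\ref{P:FK-rep}, for which parallel transport is trivial and $\mathscr{V}^{x}_{t}=\exp(-\int_0^t v(B_s(x))\,ds)$) yields
\begin{equation*}
T_t|f|(x)=\mathbb{E}\left[e^{-\int_0^t v(B_s(x))\,ds}\,|f(B_t(x))|_{E_{B_t(x)}}\,1_{\{t<\zeta(x)\}}\right].
\end{equation*}
Comparing the two expectations, the desired domination reduces to the pathwise operator-norm estimate $|\mathscr{V}^{x}_{t}|_{\End E_x}\leq \exp(-\int_0^t v(B_s(x))\,ds)$.

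I expect this pathwise estimate to be the main obstacle, and I would prove it by analysing the defining (pathwise) linear ODE $d\mathscr{V}^{x}_{t}=-\mathscr{V}^{x}_{t}\widetilde{V}_t\,dt$, where $\widetilde{V}_t:=\slash\slash^{t,-1}_{x}V(B_t(x))\slash\slash_{t}^{x}$ is self-adjoint on $E_x$. The hypothesis $V(x)\geq v(x)I$ combined with the unitarity of $\slash\slash_{t}^{x}$ shows the order relation is preserved under conjugation, so $\widetilde{V}_t\geq v(B_t(x))I$. Setting $P_t:=(\mathscr{V}^{x}_{t})^{*}\mathscr{V}^{x}_{t}$ one computes $\dot{P}_t=-\widetilde{V}_tP_t-P_t\widetilde{V}_t$; writing $\widetilde{V}_t=v(B_t(x))I+R_t$ with $R_t\geq 0$ and factoring out the scalar exponential reduces the claim to showing that the solution $\Phi_t$ of $\dot{\Phi}_t=-R_t\Phi_t-\Phi_t R_t$, $\Phi_0=I$, satisfies $\|\Phi_t\|\leq 1$. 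This holds because $\Phi_t=W_tW_t^{*}$, where $W_t$ solves $\dot{W}_t=-R_tW_t$, $W_0=I$, and $\frac{d}{dt}|W_t\xi|^2=-2\langle R_tW_t\xi,W_t\xi\rangle\leq 0$ forces $\|W_t\|\leq 1$. This is exactly the Kato--Simon type argument underlying property~(iv) of Theorem~2.13 in~\cite{Guneysu-12}. The remaining points---measurability and integrability needed to move the fiber norm inside the expectation, and the pointwise-a.e.\ validity of the Laplace representation---are routine given the contraction property of the semigroups, so I would treat them briefly.
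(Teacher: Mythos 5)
Your proposal is correct and follows essentially the same route as the paper: the paper's proof consists of invoking the Feynman--Kac representation~(\ref{E:F-K}) together with the argument of property (iv) of Theorem 2.13 in~\cite{Guneysu-12}, which is precisely the Laplace-transform reduction to semigroup domination plus the pathwise Kato--Simon estimate $|\mathscr{V}^{x}_{t}|\leq \exp(-\int_0^t v(B_s(x))\,ds)$ that you spell out. You have simply written out in full the details that the paper delegates to the citation.
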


In the next proposition we state a coercive estimate for $L^{d}_{v}$. In the case $p=2$, assuming just geodesic completeness on $M$, the inequality~(\ref{M:Del_bound}) below was proven in Lemma 8 in \cite{Milatovic06-separation}. For the proof of~(\ref{M:Del_bound}) in the case $p\neq 2$ see Theorem 1.2 in~\cite{Milatovic13-separation}. Though stated under a bounded geometry hypothesis on $M$, the proof of the quoted result from~\cite{Milatovic13-separation}, which uses a sequence of second order cut-off functions along with $L^p$-positivity preservation property mentioned in section~\ref{S:intro-1} above,  works without change if we assume, in addition to geodesic completeness, that the Ricci curvature of $M$ is bounded from below by a constant. We should also mention that the two cited results from~\cite{Milatovic06-separation, Milatovic13-separation} use the assumption~\eqref{E:assumption-2}.

\begin{prop}\label{L:c-e} Let $M$ be as in the hypotheses of Theorem~\ref{T:main-2}. Assume that $0\leq v\in C^1(M)$ satisfies~(\ref{E:assumption-2}). Then, the following estimate holds for all $u\in D^{d}_p$:
\begin{equation}\label{M:Del_bound}
\|vu\|_{p} \leq C\|L^{d}_{v}u\|_{p} = C\|H^{d}_{p,v} u\|_{p},
\end{equation}
where $C\geq 0$ is a constant.
\end{prop}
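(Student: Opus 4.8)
The plan is to recognize that~(\ref{M:Del_bound}) is precisely the \emph{scalar} coercive estimate established in \cite{Milatovic06-separation} (Lemma 8, for $p=2$) and in \cite{Milatovic13-separation} (Theorem 1.2, for $p\neq 2$), and to organize the proof so that the needed geometric input is only geodesic completeness, supplemented by a lower Ricci bound when $p\neq 2$. A preliminary reduction applies in every case: since $H^{d}_{p,v}=(L^{d}_{v}|_{\mcomp})^{\sim}$ by section~\ref{SS:op-h}, each $u\in D^{d}_{p}=\Dom(H^{d}_{p,v})$ is the $L^p$-limit of a sequence $u_k\in\mcomp$ with $L^{d}_{v}u_k\to H^{d}_{p,v}u$, so, as in the continuation of the proof of Theorem~\ref{T:main}, it suffices to obtain the bound with a constant independent of the approximant and then pass to the limit.

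The model is the case $p=2$, where I would imitate the integration-by-parts computation underlying Lemma~\ref{L:lemma-1}. Writing $f=L^{d}_{v}u=\Del_M u+vu$ for $u\in\mcomp$ and expanding $\|f\|^2$, the only indefinite contribution comes from the cross term $2\RE(\Del_M u,vu)$, which integration by parts rewrites as $2\RE\int\langle du,(dv)u\rangle\,d\mu+2\int v|du|^2\,d\mu$; the gradient hypothesis $|dv|\le\gamma v^{3/2}$ together with $2ab\le ka^2+k^{-1}b^2$ lets the nonnegative term $\int v|du|^2$ absorb the indefinite one, leaving $\|vu\|\le C\|L^{d}_{v}u\|$ once $\gamma$ is below the threshold produced by the balance in that inequality.

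For $p\neq 2$ the same mechanism is used, but the $L^2$ expansion is no longer available and one tests the equation against a power of $vu$. To remove absolute values I would invoke Kato's inequality $\Del_M|u|\le\RE(\sgn(\bar u)\,\Del_M u)$, which turns $\Del_M u=f-vu$ into the distributional inequality $(\Del_M+v)w\le|f|$ for $w:=|u|\ge0$; pairing this with the nonnegative test function $(vw)^{p-1}$ and integrating by parts produces the nonnegative term $(p-1)\int(vw)^{p-2}v|dw|^2$, the term $\|vw\|_p^p$, and a cross term bounded, via $|dv|\le\gamma v^{3/2}$ and Young's inequality, by a small multiple of the former plus a multiple of $\|vw\|_p^p$. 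Choosing the Young parameter so as to absorb both yields $\|vu\|_p=\|vw\|_p\le C\|f\|_p=C\|L^{d}_{v}u\|_p$.

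The hard part is making this rigorous on a \emph{noncompact} $M$, and it is exactly here that the Ricci hypothesis enters: a function $u\in D^{d}_{p}$ is only in $W^{2,p}_{\loc}(M)$ without compact support, so the integrations by parts above have boundary contributions at infinity and the Kato step yields merely a distributional inequality. Following \cite{Milatovic13-separation}, I would (i) insert a sequence of second-order (Laplacian) cutoff functions $\chi_k$ to justify the integrations by parts with error terms that vanish in the limit, the construction of such $\chi_k$ being guaranteed by geodesic completeness and Ricci bounded below, and (ii) use the $L^p$-positivity preservation property of $M$ — also available under that hypothesis — to convert the distributional Kato inequality into the sought $L^p$ bound. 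The final step of the plan is then to verify that the bounded-geometry assumption of \cite{Milatovic13-separation} is invoked only through these two facts, so that the cited argument goes through unchanged under geodesic completeness together with a lower Ricci bound.
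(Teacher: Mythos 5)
Your proposal is correct and follows essentially the same route as the paper, which simply cites Lemma 8 of \cite{Milatovic06-separation} for $p=2$ and Theorem 1.2 of \cite{Milatovic13-separation} for $p\neq 2$, noting that the latter argument (second-order Laplacian cut-offs plus the $L^p$-positivity preservation property) carries over verbatim once bounded geometry is replaced by geodesic completeness and a lower Ricci bound. You have merely unpacked the internal mechanics of those cited proofs --- the $L^2$ integration-by-parts absorption and the Kato-inequality/test-function argument --- which is exactly the content the paper attributes to them.
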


\noindent \textbf{Continuation of the Proof of Theorem~\ref{T:main-2}} In the following discussion, $C$ will indicate a non-negative constant, not necessarily the same as the one in~(\ref{M:Del_bound}). Let $v \colon L^p(M) \rightarrow L^p(M)$ denote the maximal multiplication operator corresponding to the function $v$. We first show that the operator $vR^{d}_{v}: L^p(M) \rightarrow L^p(M)$ is bounded. Letting $w\in L^p(M)$ be arbitrary, we have $R^{d}_{v}w \in \Dom(H^{d}_{p,v})= D^{d}_p$. Applying \eqref{M:Del_bound} with $u=R^{d}_{v}w$, we obtain
\begin{align*}
&\|vR^{d}_{v}w\|_{p} \leq C\|H^{d}_{p,v}R^{d}_{v}w\|_{p} \leq C(\|w\|_p + \|R^{d}_{v}w\|_{p}) \leq C\|w\|_{p}.
\end{align*}
This proves $vR^{d}_{v}: L^p(M) \rightarrow L^p(M)$  is a bounded operator.
We then observe that by the boundedness of the operator $vR^{d}_{v}$, the assumption
$v(x)I \leq V(x) \leq \delta v(x)I$, and the
domination inequality~\eqref{E:domination}, we have
\begin{equation}\nonumber
\|V R^{\nabla}_Vf\|_{p} \leq \delta\|vR^{d}_{v}|f|\|_{p} \leq C\|f\|_{p},
\end{equation}
for all $f\in L^p(E)$. This shows that the operator $VR^{\nabla}_V\colon L^p(E)\rightarrow L^p(E)$ is bounded.

Let $h \in D^{\nabla}_p$ be arbitrary and write $Vh = VR^{\nabla}_V(h + L^{\nabla}_{V}h)$. Using the boundedness of the operator $V R^{\nabla}_V$, we obtain
\begin{equation}\nonumber
\|Vh\|_{p} \leq C(\|h\|_{p} + \|L^{\nabla}_{V}h\|_{p}),
\end{equation}
which shows that $L^{\nabla}_{V}$ is separated in $L^p(E)$. $\hfill\square$

%------------------------------------------------------------------------

\end{document}